\documentclass[12pt]{amsart}
\usepackage{amscd}
\usepackage{amsaddr}
\usepackage{amssymb}
\usepackage{a4wide}
\usepackage{amstext}
\usepackage{amsthm}
\usepackage{mathrsfs}
\usepackage{color}

\renewcommand{\phi}{\varphi}

\newcommand{\N}{\mathbb{N}}

\newcommand{\D}{\mathbb{D}}

\newcommand{\T}{\mathbb{T}}

\newtheorem{Thm}{Theorem}[section]
\newtheorem{theorem}[Thm]{Theorem}
\newtheorem{lemma}[Thm]{Lemma}

\newtheorem{remark}[Thm]{Remark}

\newcommand{\hb}{\mathscr{H}_\beta}
\newcommand{\om}{\omega}

\textwidth=16cm
\oddsidemargin=0cm
\topmargin=-0.5cm
\textheight=640pt

\begin{document}
\sloppy

\title[]{Representing systems of reproducing kernels \\ in spaces of analytic functions}

\author{Anton Baranov$^*$, Timur Batenev}

\address{Department of Mathematics and Mechanics, St.~Petersburg State University, St.~Petersburg, 198504, Russia}

\address{$^*$Corresponding author. E-mail:  anton.d.baranov@gmail.com}




\keywords{Hardy space, weighted Hardy space, reproducing kernel, representing system}

\subjclass[2000]{Primary 30H20; Secondary 30D10, 30E05, 42C15,  94A20}

\thanks{The work is supported by Ministry of Science and Higher Education of the Russian Federation (agreement No 075-15-2021-602) and by Theoretical Physics and Mathematics Advancement Foundation ``BASIS''}

\begin{abstract} {We give an elementary construction of representing systems of the Cauchy kernels
in the Hardy spaces $H^p$, $1 \le p <\infty$, as well as of
representing systems of reproducing kernels in weighted Hardy spaces.} 
\end{abstract}

\maketitle


\section{Introduction and main results}
\label{section1}

A system $\{x_n\}_{n\ge 1}$ in a separable infinite-dimensional Banach space $X$
is said to be a {\it representing system for $X$} if, for every
element $x\in X$, there exists a sequence of complex numbers $\{c_n\}_{n\ge 1}$  such that
$$
x=\sum_{n\ge 1} c_n x_n,
$$
where the series converges in the norm of $X$. In contrast to the (probably better known) notion of the Schauder basis
we do not require that the coefficients in this representation are unique. 

Representing systems were much studied both in the general functional analysis context and 
for some specific systems in functional spaces. E.g., there exists a vast literature dealing with representing systems of 
exponentials in various Frech\'et spaces of analytic functions (see surveys \cite{is, kor}). 
However, it seems that representing systems 
of reproducing kernels in classical spaces of analytic functions in the disk did not attract much attention until recently.
\medskip

\subsection{Classical Hardy spaces}
In \cite{fkl} E.~Fricain, L.\,H.~Khoi and P.~Lef\`evre addressed the existence problem 
for the representing and absolutely representing systems of
reproducing kernels in reproducing kernel Hilbert spaces and showed that many classical spaces do not possess
absolutely representing systems of reproducing kernels. The question about existence of
representing systems remained open. In particular, in \cite{fkl}, the authors asked the following 
\medskip
\\
{\bf Question.} {\it Do there exist
sequences $\Lambda = \{\lambda_n\}_{n\ge 1} \subset \D$ such that the system $\mathcal{K}(\Lambda) = 
\{k_{\lambda_n}\}_{n\ge 1}$,
where 
$$
k_\lambda(z) = \frac{1}{1-\bar \lambda z}
$$
is the Cauchy \textup(or Szeg\"o\textup) kernel at $\lambda$, is representing for the Hardy space $H^2$ in the unit disk $\D$? }

The positive answer to this question was given by K.\,S.~Speranskii and P.\,A.~Terekhin \cite{st1}. Namely, it was shown in \cite{st1} that
for the sequence 
$$
\Lambda = \Big\{ \lambda_{k,j} = \Big(1-\frac{1}{k} \Big) e^{\frac{2\pi i j}{k}}:  k\ge 1, j=0, 1, \dots k-1\}
$$
the system $\mathcal{K}(\Lambda)$ is representing for $H^2$. The sequence $\Lambda$ is assumed to have 
the standard alphabetical order: $\lambda_{1, 0}, \lambda_{2,0}, \lambda_{2,1}, \lambda_{3,0}, \dots$. In what follows
we always assume that sequences with double (or triple) index will be ordered in this way.
In \cite{st2} Speranskii and Terekhin extended their result to a more general class of sequences. 
Let $n_k \in \mathbb{N}$ and  $r_k\to 1-$, $k\to\infty$. 
Define the sequence $\Lambda$ by
\begin{equation}
\label{loop}
\Lambda = \Big\{ \lambda_{k,j} = r_k e^{\frac{2\pi i j}{n_k}}:  k\ge 1, j=0, 1, \dots n_k-1\}.
\end{equation}
As shown in \cite{st2}, if there exist positive constants $A$ and $B$ such that 
$A \le n_k(1-r_k) \le B$ for all $k$, then $\mathcal{K}(\Lambda)$ is a representing system for $H^2$. 

The proofs in \cite{st1, st2} are based on interesting abstract functional analysis methods from the papers \cite{ter1, ter2} 
which relate representing systems with coefficients from a given function space to a certain generalized notion of a frame
(see Section \ref{hardf} for details). 

The goal of the present work is to give {\it a very simple elementary construction} of a representing 
system of the Cauchy kernels, which does not make use of functional analysis. 
The idea is to use a discretization of the Cauchy formula. This method applies
to all Hardy spaces $H^p$, $1<p<\infty$, but does not cover all systems of the form \eqref{loop}
with $A \le n_k(1-r_k) \le B$:  it is required that the constant $A$ is sufficiently large. 
However, an application of the frame theory method by 
Speranskii and Terekhin allows us to prove the result for any $A>0$.

\begin{theorem} 
\label{hardy}
If $\Lambda$ is given by \eqref{loop} and there exists $M>0$ such that $n_k (1-r_k) \ge M$ for any $k$,
then $\mathcal{K}(\Lambda)$ is a representing system for $H^p$ for any $p \in (1, \infty)$.
\end{theorem}

We will give two proofs of Theorem \ref{hardy}. The first one is completely elementary
and constructive, but applies only to the case $ M> \pi$, while
the second one works for any $M>0$. Both of these proofs do
not extend to the case $p=1$. The main obstacle for the first method is in the fact 
that the Cauchy transform is not bounded
in $L^1$. However, one can construct representing systems of the Cauchy kernels in $H^1$
if we take the points uniformly distributed on the circle $\{|z| = 1- 1/n_k\}$ with certain logarithmic multiplicities.
For a precise formulation see Theorem \ref{hardy1}.

It is obvious that there are no representing systems of the Cauchy kernels in $H^\infty$, since the uniform limit
of their finite linear combinations belongs to the disk-algebra $A(\D)$ (the space of all functions continuous
in $\overline{\D}$ and analytic in $\D$ equipped with the usual sup-norm). However, the systems of the
Cauchy kernels from Theorem \ref{hardy1} are representing also for the disk algebra $A(\D)$.
\medskip


\subsection{Weighted Hardy spaces}
Our second result concerns the class of weighted Hardy spaces $\hb$ in the disk. Let
the sequence $\beta = \{\beta_n\}_{n=0}^{\infty}$, $\beta_n > 0$, 
satisfy
\begin{equation}
\label{ls}
\limsup_{n\to \infty}\, (1/\beta_n)^{1/n} \le 1, \qquad 
\limsup_{n\to \infty} \beta_n^{1/n} \le 1.
\end{equation}
Consider the set of analytic functions 
$$
\mathscr{H}_\beta= \bigg\{f(z)=\sum\limits_{n=0}^{\infty}a_n z^n: \ 
\sum\limits_{n=0}^{\infty} |a_n|^2\beta_n < +\infty \bigg\}.
$$ 
It follows from \eqref{ls} that $\hb$ consists of functions analytic in the unit disk $\D$ and contains functions 
which are not analytic in any larger disk. 
It is clear that $\hb$ is a reproducing kernel Hilbert space with respect to the norm
$\|f\|_\beta^2= \sum\limits_{n=0}^{\infty} |a_n|^2\beta_n$ and its kernel 
at the point $\lambda\in\D$ is given by  
$$
K^\beta_\lambda(z)= \sum\limits_{n=0}^{\infty}\frac{\overline{\lambda}^n}{\beta_n}z^n.
$$

Weighted Hardy spaces $\hb$ include most of the classical spaces of analytic functions in the unit disk:
the Hardy space $H^2$ ($\beta_n \equiv 1$), Bergman spaces $A^2_\alpha$ with the weight $(\alpha+1) (1-|z|^2)^\alpha$, 
$\alpha>-1$ ($\beta_n = \frac{n!\Gamma(\alpha+2)}{\Gamma(n+\alpha+2)}$), 
the Dirichlet space ($\beta_n = n+1$).

Recall that a sequence $\{x_n\}$ is said to be a {\it frame} in a Hilbert space $H$ if there exist constants $A,B>0$
such that $A\|x\|^2 \le \sum_n |(x, x_n)|^2 \le B\|x\|^2$ for any $x\in H$;
if one has only the above estimate $\sum_n |(x, x_n)|^2 \le B\|x\|^2$, then $\{x_n\}$ is said to be a
{\it Bessel sequence}. Any frame is, in particular, a representing system.

It is well known that in Bergman spaces $A^2_\alpha$ there exist frames of normalized reproducing kernels; 
their complete description was given by K.~Seip \cite{seip} (for general weighted Bergman spaces see 
\cite{bdk, seip1}). Therefore, the existence of representing 
sequences of reproducing kernels (but not their description) in the Bergman space setting is trivial. 
On the other hand, $H^2$ has no frames of normalized Cauchy kernels (and even complete Bessel sequences). 
Indeed, for any Bessel sequence $\{ k_{z_n}/\|k_{z_n}\|_{H^2} \}$ one has $\sum_n (1-|z_n|^2) <\infty$ 
(simply applying the inequality with $f\equiv 1$), whence $\{z_n\}$ 
is a Blaschke ($=$nonuniqueness) sequence. 

More generally, if $\inf_n \beta_n =\delta >0$, then $\hb$ has no frames of normalized reproducing kernels. 
Indeed, if $\{K^\beta_{z_n}/\|K^\beta_{z_n}\|_\beta\}$ is a frame, then $\sum_n \|K^\beta_{z_n}\|^{-2}_\beta <\infty$.
Let $B_N$ be the Blaschke product with the zeros $z_1, \dots, z_N$. Then 
$$
\sum_{n>N} |B_N(z_n)|^2 \|K^\beta_{z_n}\|^{-2}_\beta \le \sum_{n>N} \|K^\beta_{z_n}\|^{-2}_\beta \to 0
$$
as $N\to \infty$. On the other hand, since $\beta_n\ge \delta$, 
we have $\|B_N\|^2_\beta \ge \delta \|B_N\|^2_{H^2} =\delta$, and we come to a contradiction 
with the frame inequality.

Thus, weighted Hardy spaces which are smaller than $H^2$ (e.g., the Dirichlet space) possess no 
frames of normalized reproducing kernels and the problem about existence of representing systems
of reproducing kernels becomes nontrivial. We give a positive  answer to this  question.

\begin{theorem}
\label{wei}
For any sequence $\beta$ satisfying \eqref{ls} 
in the space $\hb$ there exist representing systems of reproducing kernels.
\end{theorem}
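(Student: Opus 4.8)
The plan is to construct $\Lambda$ explicitly as a union of finite ``blocks'', each block consisting of finitely many equally spaced points on some circle $\{|z|=r\}$, $r<1$, and to exhibit an expansion of an arbitrary $f\in\hb$ into a norm‑convergent series of the kernels $K^\beta_{\lambda_n}$ by reproducing the Taylor series of $f$ one term at a time, using a fresh block at each step. The underlying reduction is the usual one: it suffices to arrange that for every $g\in\hb$, every $\vep>0$ and every $N$ there is a finite combination $h=\sum_{n>N}c_nK^\beta_{\lambda_n}$ with $\|g-h\|_\beta<\vep$ all of whose partial sums (taken in the order fixed on $\Lambda$) have norm at most $C\|g\|_\beta$ for a constant $C$ not depending on $g$. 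Feeding in the quantities $g=a_mz^m+e_{m-1}$, where $e_{m-1}$ is the running approximation error and $a_mz^m$ the $m$‑th Taylor term of $f$, and telescoping, then produces the required series for $f$: the error $e_m$ shrinks and $|a_m|\sqrt{\beta_m}\to0$, so both the partial sums inside each block and the discrepancies $f-(\text{partial sums at block boundaries})$ tend to $0$.

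The basic gadget is the elementary identity
$$\frac{1}{2\pi}\int_0^{2\pi}K^\beta_{re^{i\theta}}(z)\,e^{im\theta}\,d\theta=\frac{r^m}{\beta_m}\,z^m,$$
which says that monomials are exact circular averages of reproducing kernels. Replacing the integral by its $n$‑point Riemann sum over the roots of unity $\lambda_l=re^{2\pi il/n}$, $l=0,\dots,n-1$, gives, for $m<n$,
$$\frac{\beta_m}{n\,r^m}\sum_{l=0}^{n-1}e^{2\pi ilm/n}\,K^\beta_{\lambda_l}(z)=z^m+E(z),\qquad E(z)=\sum_{l\ge 1}\frac{\beta_m\,r^{ln}}{\beta_{m+ln}}\,z^{m+ln},$$
so the only error is ``aliasing''. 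By the first inequality in \eqref{ls} the sequence $1/\beta_j$ grows sub‑exponentially, whence
$$\|E\|_\beta^2=\beta_m^2\sum_{l\ge 1}\frac{r^{2ln}}{\beta_{m+ln}}\longrightarrow 0\qquad(n\to\infty)$$
for any fixed $m$ and $r<1$. Thus one block of $n$ equally spaced points on $\{|z|=r\}$ reproduces a prescribed monomial $z^m$ (and hence, after summing, any prescribed polynomial) with arbitrarily small error once $n$ is large.

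The delicate point is the size of the partial sums of the combination above. Writing them out and using the Dirichlet‑kernel bound $\bigl|\sum_{l=0}^{L}e^{2\pi ilt/n}\bigr|\le\min\bigl(n,\tfrac12|\sin(\pi t/n)|^{-1}\bigr)$ together with both inequalities in \eqref{ls}, one finds that, \emph{for $r$ close enough to $1$} (relative to $m$) and $n$ large, all partial sums of $\tfrac{\beta_m}{n r^m}\sum_l e^{2\pi ilm/n}K^\beta_{\lambda_l}$ have norm at most $C\|z^m\|_\beta$ with $C$ independent of $m$. This is why the radii must tend to $1$ (as in Theorem~\ref{hardy}). Accordingly I would take $\Lambda$ to be a countable \emph{library} containing, for every radius $r<1$ and every sufficiently large multiplicity $n$, a block of $n$ equally spaced points on $\{|z|=r\}$, each such block occurring infinitely often along $\Lambda$, so that at every step of the iteration a fresh, suitably located block is available. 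Given $f=\sum_n a_nz^n$, one then runs the inductive scheme of the first paragraph: having reproduced $a_0z^0+\dots+a_{m-1}z^{m-1}$ up to the error $e_{m-1}$, approximate $a_mz^m+e_{m-1}$ — first by a polynomial, then that polynomial monomial by monomial, each monomial by a combination drawn from a fresh library block of radius close to $1$ and large multiplicity — and update the error.

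The main obstacle is precisely this partial‑sum control, and it is genuinely the only hard step: density of polynomials in $\hb$ is immediate, the aliasing estimate is a two‑line computation, and the telescoping iteration is routine. For the classical weights — $H^2$, the weighted Bergman spaces, the Dirichlet space — the partial‑sum bound is a direct estimate of the kind sketched above. For a general weight satisfying only \eqref{ls} the difficulty is that $\beta_n$ may oscillate (e.g., $\beta_n$ alternating between $1$ and $2^{-\sqrt n}$), so that the enormous factors $1/\beta_j$ at the ``bad'' indices $j$ are laid bare by an initial segment of a root‑of‑unity block before the cancellation present in the full sum can act. I would handle this by not using the naive left‑to‑right order on each block but rather by listing the points of a block in a lattice‑type order (taking the block size highly composite), chosen so that every partial sum already annihilates the contributions of the offending frequencies; since the multiplicity of each block may be enlarged at will, the remaining, regular part can then be controlled exactly as in the model cases.
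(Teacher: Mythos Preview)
Your outline is genuinely different from the paper's argument, and the gap you yourself flag is real and not closed by what you write. The paper does not try to reproduce monomials one at a time. Instead it introduces the auxiliary function $F_\rho(z)=\sum_n a_n\beta_n\rho^n z^n$ and the integral representation $f(rz)=\int_\T F_{r/R}(\zeta)\,\overline{K_z(R\zeta)}\,dm(\zeta)$, which it then discretizes over $N$ equally spaced points on $\{|w|=R\}$. The point of $F_\rho$ is that it satisfies $\|F_\rho\|_{L^2(\T)}^2\le\omega_1(\rho)\|f\|_\beta^2$ with $\omega_1(\rho)=\sup_n\rho^{2n}\beta_n$, so the $\beta$-oscillation is absorbed into a single scalar factor; the discretization error is then controlled by $\omega_1(R)\omega_2(R)/N^2$ with $\omega_2(R)=\sum n^2R^{2n}/\beta_n$, and both quantities are finite for any $\beta$ satisfying \eqref{ls}, so one simply takes $N$ large.

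For the partial sums over an incomplete arc the paper uses the same device as in the $H^1$ result: it averages $M$ copies of the representation taken at $M$ slightly different radii $R_1<\dots<R_M=R$. The crude bound on a single incomplete-arc integral is $\omega_1(R)^{1/2}\omega_3(R)^{1/2}\|f\|_\beta$ (with $\omega_3(R)=\|K^\beta_R\|_\beta^2$), and the prefactor $1/M$ kills it once $M^2\gtrsim\omega_1(R)\omega_3(R)$. No ordering trick is needed, and the construction works uniformly for every $\beta$ obeying \eqref{ls}. By contrast, your monomial-by-monomial scheme concentrates all the weight on the exponent $j=m$ and then asks Dirichlet-kernel cancellation to suppress every other $j$; when $\beta_m/\beta_{m\pm1}$ can be as large as $e^{c\sqrt m}$ the neighbouring frequencies are not suppressed by the $1/|m-j|$ Dirichlet decay, and your ``lattice-type order'' fix is not specified precisely enough to verify --- for every partial length $L$ you would need the reordered exponential sum to be tiny at an unbounded, $m$-dependent set of residues modulo $n$, which is far from automatic. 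A minor point: your ``library containing, for every radius $r<1$, a block~\dots'' is uncountable as stated; you would need to restrict to a countable family of radii.
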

\bigskip


\section{Simple proof of Theorem \ref{hardy}}
\label{hard}
Recall that the Hardy space $H^p$, $1 \le p<\infty$, consists of all functions $f$ analytic in $\D$
and such that
$$
  \|f\|^p_{H^p}=\sup\limits_{0<r<1} 
\int_\T |f(r \zeta)|^p\,dm(\zeta) <\infty.
$$ 		
Here $m$ denotes the normalized Lebesgue measure on the unit circle $\T$. 
Since $H^p$ is a closed subspace of $L^p(\T)$ in what follows we sometimes denote the norm in $H^p$ and $L^p$
by $\|\cdot\|_p$.

For any $f\in H^p$ one has
\begin{equation}
\label{repr}
f(z) = \int_\T f(\zeta)\overline{k_z(\zeta)} dm(\zeta)= \int_\T \frac{f(\zeta)}{1-\bar \zeta z}dm(\zeta), \qquad z\in \D.
\end{equation}
In particular, $k_z$ is the reproducing kernel of $H^2$ at the point $z\in\D$
and the Cauchy transform
$$
(\mathcal{C} g) (z) = \int_\T \frac{g(\zeta)}{1-\bar \zeta z}dm(\zeta), \qquad z\in \D, 
$$
is the orthogonal projection of a function $g\in L^2(\T)$ to $H^2$. The same is true for any $p\in (1, \infty)$: 
there exists $C_p>0$ such that for any $g\in L^p(\T)$ one has $\mathcal{C} g \in H^p$ and $\|\mathcal{C} g\|_p 
\le C_p \|g\|_p$.

The idea of the proof of Theorem \ref{hardy} is to replace the integral \eqref{repr} by a certain ``discretization''. 
\medskip
\\
{\bf Proof of Theorem \ref{hardy}.}
When we approximate a given function $f\in H^p$, the points (or rather layers) of $\Lambda$ given  by \eqref{loop} 
will be defined inductively. We first explain one step of induction. 
Let $f\in H^p$ be given. Put $f_r(z) = f(rz)$. It is well known that $\|f -f_r\|_p \to 0$, $r\to 1-$.
Therefore, for any positive $\delta$ (to be specified later) we can 
choose $r_k$ such that $\|f- f_{r_k}\|_p \le \delta \|f\|_p$. 

Let $I_j = I_{k,j}$, $0\le j\le n_k-1$, be the arcs of $\T$ defined as
\begin{equation}
\label{arc}
I_j = I_{k,j} = \Big[\exp \Big(\frac{(2j-1) \pi i }{n_k}\Big), \exp \Big(\frac{ (2j+1)\pi i}{n_k}\Big)\Big].
\end{equation}
and let $\zeta_j =\zeta_{k,j}= \exp \big( \frac{2\pi i j}{n_k} \big)$.  
At this step the index $k$ is fixed, thus, we omit it and write simply $I_j$, $\zeta_j$. Then
$$
f(r_k z) =\int_\T \frac{f(\zeta)}{1-r_k \bar \zeta z}dm(\zeta) = \sum_{j=0}^{n_k-1}
\int_{I_j} \frac{f(\zeta)}{1-r_k \bar \zeta z}dm(\zeta). 
$$
Now it is natural to approximate $f_{r_k}(z) = f(r_k z)$ by
$$
S(z) = \sum_{j=0}^{n_k-1}
\frac{1}{1- r_k \bar \zeta_j z}
\int_{I_j} f(\zeta) dm(\zeta).
$$

Let us show that if $n_k(1-r_k) \ge M>\pi$, then there
exists a numeric constant $\gamma \in (0,1)$ such that for all sufficiently large $k$ one has
\begin{equation}
\label{dod}
\| f_{r_k}  - S \|_{H^p} \le \gamma \|f\|_p.
\end{equation}
Hence, $\| f- S\|_{H^p} \le (\gamma+\delta) \|f\|_p$ 
and we need to choose $\delta>0$ so that $\gamma+\delta <1$. We have
$$
f(r_k z) - S(z) = \sum_{j=0}^{n_k-1}
\int_{I_j} \frac{r_k z(\bar \zeta - \bar \zeta_j)}{(1-r_k \bar \zeta_j z )(1 - r_k \bar \zeta z)}  f(\zeta) dm(\zeta).
$$
Note that for any $\zeta\in I_j$ we have $|\zeta-\zeta_j| \le \pi n_k^{-1} <1-r_k$ and
therefore $|1-r_k \bar \zeta z| \le 2 |1 - r_k \bar \zeta_j z|$. Thus, 
\begin{equation}
\label{dodr}
|f(r_k z) - S(z)| \le \frac{2\pi}{n_k} \sum_{j=0}^{n_k-1}
\int_{I_j} \frac{|f(\zeta)|}{|1 - r_k \bar \zeta z|^2} dm(\zeta) = 
\frac{2\pi}{n_k}\int_{\T} \frac{|f(\zeta)|}{|1 - r_k \bar \zeta z|^2} dm(\zeta).
\end{equation}
By the H\"older inequality ($1/p +1/q = 1$), we have
$$
\|f_{r_k} - S\|_{H^p}^p \le 
\frac{(2\pi)^p}{n_k^p} \int_\T \bigg( \int_{\T} \frac{|f(\zeta)|^p}{|1 - r_k \bar \zeta z|^2} dm(\zeta)\bigg) 
\bigg( \int_{\T} \frac{dm(\zeta)}{|1 - r_k \bar \zeta z|^2} \bigg)^{p/q} dm(z).
$$
Since $\int_{\T} |1 - r_k u|^{-2} dm(u) =(1-r_k^2)^{-1}$, 
we conclude that
$$
\|f_{r_k} - S\|_{H^p} \le 2\pi \frac{\|f\|_{H^p}}{n_k(1-r_k^2)} \le \frac{2\pi}{M(1+r_k)} \|f\|_{H^p}. 
$$
Note that $r_k$ can be chosen as close to $1$ as we wish. Hence, since $M>\pi$, we have 
$\|f_{r_k} - S\|_{H^p} \le \gamma \|f\|_{H^p}$ for some absolute numeric constant  $\gamma\in (0,1)$. 
Since $\delta$ also can be chosen as small as we wish, we get $\|f - S\|_{H^p} \le \gamma \|f\|_{H^p}$
with another numeric constant  $\gamma\in (0,1)$ and for all sufficiently large $k$. 

Also, note that there exists a constant $B_p>0$ (depending only on $p$)  such that 
for any $0\le n\le n_k-1$ one has
\begin{equation}
\label{dod1}
\bigg\|\sum_{j=0}^{n} \frac{1}{1- r_k \bar \zeta_j z}
\int_{I_j} f(\zeta) dm(\zeta)\bigg\|_{H^p}
\le B_p \|f\|_{H^p}.
\end{equation}
Indeed, above we already showed that, for any $n\le n_k-1$, 
$$ 
\bigg\|\sum_{j=0}^{n}\frac{1}{1- r_k \bar \zeta_j z}
\int_{I_j} f(\zeta) dm(\zeta)  - 
\int_{\cup_{j=0}^n I_j} \frac{f(\zeta)}{1- r_k \bar \zeta z}dm(\zeta)
\bigg\|_{H^p} \le \gamma \|f\|_{H^p},
$$
while for the second term we
use the boundedness of the Cauchy transform in $L^p$, $1<p<\infty$:
$$ 
\bigg\| \int_{\cup_{j=0}^n I_j} \frac{f(\zeta)}{1- r_k \bar \zeta z}dm(\zeta)
\bigg\|_{H^p} \le C_p \|f\|_{H^p}.
$$

Now, everything is ready to complete the proof. We start with an arbitrary function $f\in H^p$ and choose $r_{k_1}$ as
described above to obtain a function  
$$
f_1(z) = f(z) -  \sum_{j=0}^{n_{k_1}-1}
\frac{1}{1- r_{k_1} \bar \zeta_{k_1, j} z}
\int_{I_{k_1, j}} f(\zeta) dm(\zeta)
$$
with $\|f_1\|_{H^p} \le \gamma \|f\|_{H^p}$, where $\gamma\in (0,1)$.

Next, we apply the same procedure to $f_1$ and find $r_{k_2}$ such that 
$\|f_2\|_{H^p} \le \gamma \|f_1\|_{H^p}$, where
$$
f_2(z) = f_1(z) -  \sum_{j=0}^{n_{k_2}-1}
\frac{1}{1- r_{k_2} \bar \zeta_{k_2, j} z}
\int_{I_{k_2, j}} f_1(\zeta) dm(\zeta).
$$
Proceeding in this way,  we obtain a sequence $k_l$ and a sequence of coefficients $c_{l,j} =
\int_{I_{k_l, j}} f_{l-1}(\zeta) dm(\zeta)$ such that
$$
\|f_N\|_{H^p} = \bigg\|f- \sum_{l=1}^N \sum_{j=0}^{n_{k_l}-1}
\frac{c_{l,j}}{1- r_{k_l} \bar \zeta_{k_l,j} z}\bigg\|_{H^p} \le \gamma^N \|f\|_{H^p}. 
$$

It remains to show that the series
$$
\sum_{l=1}^\infty \sum_{j=0}^{n_{k_l}-1}
\frac{c_{l,j}}{1- r_{k_l} \bar \zeta_{k_l,j} z}
$$
converges to $f$ in the norm of $H^p$. Indeed, for any $N\in \mathbb{N}$ and $0\le n\le  n_{k_{N+1}} -1$
we have
$$
\begin{aligned}
\bigg\|f & - \sum_{l=1}^N \sum_{j=0}^{n_{k_l}-1}
\frac{c_{l,j}}{1- r_{k_l} \bar \zeta_{k_l,j} z}  - 
\sum_{j=0}^{n} \frac{c_{N+1,j}}{1- r_{k_{N+1}} \bar \zeta_{k_{N+1},j} z}\bigg\|_{H^p} \\
& \le \bigg\|f- \sum_{l=1}^N \sum_{j=0}^{n_{k_l}-1}
\frac{c_{l,j}}{1- r_{k_l} \bar \zeta_{k_l,j} z} \bigg\|_{H^p} +
\bigg\| \sum_{j=0}^{n} \frac{c_{N+1,j}}{1- r_{k_{N+1}} \bar \zeta_{k_{N+1},j} z}\bigg\|_{H^p} \\
& \le  (1+B_p) \|f_N\|_{H^p} \le (1+B_p)  \gamma^N \|f\|_{H^p} \to 0, \qquad
N\to\infty.
\end{aligned}
$$
Here we used inequality \eqref{dod1}. The proof is completed. 
\qed

\begin{remark}
{\rm The same proof shows that we need not take $\zeta_j$  as the centers of the arc $I_j$ and can choose them
randomly in $I_j$. Repeating the arguments one immediately obtains that there exists $M>0$ such that if $n_k(1-r_k) \ge M$,
then the sequence $\Lambda = \{r_k \zeta_{k,j}: \zeta_{k,j} \in I_{k,j}, \ k\in\mathbb{N}, \ 0\le j<n_k-1\}$ 
generates a   system of Cauchy kernels which is representing in any $H^p$, $1<p<\infty$. }
\end{remark}
\bigskip


\section{Frame theory proof of Theorem \ref{hardy}}
\label{hardf}

In this section we give a proof of Theorem \ref{hardy} based on the general methods due to P.\,A.~Terekhin.

Let $F$ be a Banach space, $F^*$ be its dual
and let $X$ be a Banach space of sequences where the canonical basis vectors $e_n = (\delta_{k,n})_k$ form a 
Schauder basis. Then its dual $X^*$ also can be identified with a space of sequences. A system ${f_n}$ in $F$
is said to be  a {\it frame with respect to the space $X$} if for any $\varphi\in F^*$ one has
$$
A\|\varphi\|_{F^*} \le \|(\varphi(f_n))_n\|_{X^*} \le B \|\varphi\|_{F^*}
$$
for some $A, B >0$ (here the sequence $(\varphi(f_n))_n$ is considered as an element of $X^*$.

We will use the following result of P.\,A.~Terekhin \cite[Theorem 4]{ter1}: {\it if 
${f_n}$ is frame for $F$ with respect to $X$, then ${f_n}$ is a representing system in $F$
and any $f\in F$ can be represented as the sum of the series $f=\sum_n c_n f_n$ with $(c_n) \in X$.}

Now let $n_k\in \mathbb{N}$, $n_k\to\infty$, and $r_k \in (0,1)$ be such that $M \le n_k(1-r_k) \le \tilde M$ 
for some $M, \tilde M >0$  and for all $k$. Let 
$$
\Lambda = \Big\{ \lambda_{k,j} = |\lambda_{k,j}| e^{i\alpha_{k, j}}:  k\ge 1, j=0, 1, \dots n_k-1\},
$$
where, for some $a, b, c, d>0$, all $k$ and $j=0, 1, \dots n_k-1$,
\begin{equation}
\label{cont}
a(1-r_k) \le 1- |\lambda_{k,j}| \le b(1-r_k), \qquad 
\frac{c}{n_k} \le \alpha_{k, j+1} - \alpha_{k, j} \le \frac{d}{n_k};
\end{equation}
here, by definition, $\alpha_{k, n_k} = \alpha_{k, 0} +2\pi$.

Let $F=H^p$, where $1<p<\infty$ and $1/p+1/q=1$. Then $F^* = L^q/z H^q \cong \overline{H^q}$ with equivalence of norms, i.e., 
for any functional $\varphi \in (H^p)^*$ there exists $g\in H^q$ such that $\varphi(f) = \int_{\T} f\bar g\, dm$
and $\|\varphi\| \asymp \|g\|_{H^q}$ with constants depending on $p$ only. 
Define the sequence spaces $X = \Big(\bigoplus\limits_{k=1}^{\infty} \ell^p_{n_k}\Big)_{\ell^1}$
and $X^*=\Big(\bigoplus\limits_{k=1}^{\infty} \ell^q_{n_k}\Big)_{\ell^{\infty}}$ with the norms
$$
\|(c_{k,j})\|_X = \sum_{k=1}^\infty \Bigg(\sum_{j=0}^{n_k-1} |c_{k,j}|^p\Bigg)^{1/p}, 
\qquad
\|(c_{k,j})\|_{X^*} = \sup_k \Bigg(\sum_{j=0}^{n_k-1} |c_{k,j}|^q\Bigg)^{1/q}.
$$

Recall that $\|k_\lambda\|_{H^p} \asymp (1-|\lambda|)^{-1/q}$ and consider the system of (almost) normalized kernels
$\{(1-r_k)^{1/q} k_{\lambda_{k,j}}\}$. Note that for any $g\in H^q$ one has
$\int_\T g(z) \overline{k_\lambda(z)} dm(z)=g(\lambda)$. Therefore, to verify that 
$\{(1-r_k)^{1/q} k_{\lambda_{k,j}}\}$ is a frame for $H^p $  with respect to $X$ we need to show that 
\begin{equation}
\label{cont1}
A\|g\|_{H^q} \le \sup_k \bigg( \sum_{j=0}^{n_k-1} (1-r_k) |g(\lambda_{k,j})|^q \bigg)^{1/q} \le B
\|g\|_{H^q} 
\end{equation}
for some $A,B>0$ and any $g\in H^q$. 

The above estimate follows from the basic facts about the Carleson embeddings of the Hardy spaces
(see, e.g., \cite{dur}). 
Recall that a Borel  measure $\mu $ in $\D$ is said to be a Carleson measure if there exists $C(\mu)>0$ 
such that  for all $\zeta = e^{i\theta}\in\T$ and $h\in (0,1]$
$$
\mu(S(\zeta, h)) \le C(\mu) h,
$$
where $S(\zeta, h) = \{z=re^{i\phi}: 1-h\le r<1, |\phi-\theta|<h\}$ is a Carleson ``square''.
If $\mu$ is a Carleson measure, then, for any $f\in H^p$,
$$
\int_{\D} |f(z)|^p d\mu(z) \le A C(\mu) \|f\|_{H^p}^p,
$$
where $A$ is some absolute numeric constant. 

Consider the measures $\mu_k = \sum_{j=0}^{n_k-1} (1-r_k) \delta_{\lambda_{k,j}}$. From conditions \eqref{cont} 
it follows immediately that $C(\mu_k) \le C$ for some constant $C$ depending only on $M,\tilde M, a,b,c,d$, but not on $k$.
This proves the right-hand side estimate in \eqref{cont1}. 

Note that, in view of the already established upper bound, it is sufficient to prove 
the lower estimate in \eqref{cont1} for a dense subset of $H^q$, e.g., for functions continuous in $\overline{\D}$.
Consider the arcs $I_j = [e^{i\alpha_{k, j}}, e^{i\alpha_{k, j+1}}]$,
$j=0, \dots, n_k-1$, and note that $|I_j| \asymp 1-r_k$. Then, by the H\"older inequality,
$$
\begin{aligned}
\bigg( \sum_{j=0}^{n_k-1} & |I_j|\cdot  |g(\lambda_{k,j})|^q \bigg)^{1/q}   =
\bigg( \sum_{j=0}^{n_k-1} \int_{I_j} |g(\lambda_{k,j})|^q dm(\zeta) \bigg)^{1/q}
\\ & \ge
\bigg( \sum_{j=0}^{n_k-1} \int_{I_j} |g(r_k \zeta)|^q dm(\zeta) \bigg)^{1/q} -
\bigg( \sum_{j=0}^{n_k-1} \int_{I_j} |g(r_k \zeta) - g(\lambda_{k,j})|^q dm(\zeta) \bigg)^{1/q}.
\end{aligned}
$$
It is clear that the first term tends to $\|g\|_{H^q}$ as $k\to \infty$, while the second
tends to 0, since, by our assumption, $g$ is uniformly continuous in $\overline{\D}$.
Thus, the left-hand estimate in \eqref{cont1} is established, which completes the proof.
\qed
\bigskip


\section{Representing systems in $H^1$ and in $A(\D)$} 

It is clear from the proof of Theorem 1.1 (see estimate \eqref{dodr}) that the function $S$ well approximates
$f$ even in the cases when $p = 1$ or $f\in A(\D)$. Note that, in contrast to $H^\infty$ case, 
$\|f_r - f \|_{A(\D)} \to 0$, $r\to 1-$, if $f\in A(\D)$. The only problem arises when 
we need to estimate the norm of a discretization of the integral over an arc of the circle. Since
the Cauchy transform is unbounded in $L^1$ and in $L^\infty$, these norms can be large. 

We can construct representing systems of Cauchy kernels in $H^1$ or in $A(\D)$ by considering more
dense sets distributed over a circle with a certain ``multiplicity''.
Let $R_k \in(0,1)$, $N_k, M_k \in \N$. 
For any $j$, $1\le j \le N_k$, consider the open arc 
$I_{k,j} = (\exp(\frac{(2 j -1)\pi i}{N_k}), \exp(\frac{(2 j +1)\pi i}{N_k})) \subset \T$ and choose
$M_k$ distinct points $\zeta_{k, l, j} \in I_{k,j}$, $l=1, \dots, M_k$. Define the set 
\begin{equation}
\label{bor}
\Lambda = \{w_{k, l, j} = R_k \zeta_{k, l, j} :\ k\in\N, \ 1\le l \le M_k, \ 1\le j\le N_k\}.
\end{equation}
The set $\Lambda$ is assumed to be ordered alphabetically. 
We prefer to make the points in $\Lambda$ distinct even if 
the definition of a representing system does not exclude repeating vectors. 

\begin{theorem}
\label{hardy1}
Let $\Lambda$ be given by \eqref{bor} with $R_k \to 1-$, $k\to\infty$. Then there exists
a numeric constant $M>0$ such that if $N_k(1-R_k) \ge M$ and
$$
\log \frac{1}{1-R_k} = O(M_k),
$$
then $\mathcal{K}(\Lambda) =\{k_\lambda\}_{\lambda\in \Lambda}$ is a representing system in $H^1$
and in $A(\D)$.
\end{theorem}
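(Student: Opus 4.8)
The plan is to repeat the inductive scheme of the proof of Theorem \ref{hardy}, but to replace the single interpolation node $\zeta_{k,j}$ in the middle of each arc $I_{k,j}$ by the \emph{average} of $M_k$ Cauchy kernels based at the points $\zeta_{k,1,j},\dots,\zeta_{k,M_k,j}\in I_{k,j}$. Concretely, for a fixed $k$ (whose index we again suppress) and for $f$ in $H^1$ (or in $A(\D)$), after choosing $R=R_k$ so close to $1$ that $\|f-f_R\|_1\le\delta\|f\|_1$, we approximate
\[
f_R(z)=\sum_{j=1}^{N_k}\int_{I_j}\frac{f(\zeta)}{1-R\bar\zeta z}\,dm(\zeta)
\]
by
\[
S(z)=\sum_{j=1}^{N_k}\frac{1}{M_k}\sum_{l=1}^{M_k}\frac{1}{1-R\bar\zeta_{l,j}z}\int_{I_j}f(\zeta)\,dm(\zeta).
\]
The first point to check is the analogue of the pointwise estimate \eqref{dodr}: writing $f_R(z)-S(z)$ as a sum over $j$ and over $l$ of integrals of $f(\zeta)\bigl(\tfrac1{1-R\bar\zeta z}-\tfrac1{1-R\bar\zeta_{l,j}z}\bigr)$ against $\tfrac1{M_k}dm(\zeta)$, and using $|\zeta-\zeta_{l,j}|\le\pi/N_k$ together with $|1-R\bar\zeta z|\le M_1|1-R\bar\zeta_{l,j}z|$, one gets
\[
|f_R(z)-S(z)|\le\frac{M_2}{N_k}\int_\T\frac{|f(\zeta)|}{|1-R\bar\zeta z|^2}\,dm(\zeta).
\]
Integrating in $z$ over $\T$ and using $\int_\T|1-R u|^{-2}dm(u)=(1-R^2)^{-1}$ we obtain $\|f_R-S\|_1\le\frac{M_2}{N_k(1-R^2)}\|f\|_1\le\frac{M_2}{M(1+R)}\|f\|_1$, exactly as before; so for $M>M_2/2$ this contributes a factor $\gamma\in(0,1)$. (For $A(\D)$ one estimates the sup-norm of $f_R-S$ by a similar computation, using that the function $z\mapsto\int_\T|1-R\bar\zeta z|^{-2}|f(\zeta)|\,dm(\zeta)$ is bounded on $\overline{\D}$ since $R<1$ — here $M_k$ does not yet enter.)

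The second, and crucial, point is the analogue of the partial-sum bound \eqref{dod1}: we need a constant $B$, \emph{independent of $k$}, such that every alphabetically-ordered partial sum of $S$ has $H^1$-norm (or $A(\D)$-norm) at most $B\|f\|_1$. By the estimate above, such a partial sum differs by at most $\gamma\|f\|_1$ from a Cauchy integral of $f$ over a union of full arcs $I_j$ plus a tail term of the shape
\[
\frac{1}{M_k}\sum_{l=1}^{m}\frac{1}{1-R\bar\zeta_{l,j_0}z}\int_{I_{j_0}}f(\zeta)\,dm(\zeta),\qquad m\le M_k,
\]
coming from a partially completed arc $I_{j_0}$. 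The Cauchy integral over a union of arcs is \emph{not} uniformly bounded in $L^1$, but it is bounded by $C\log\frac{1}{1-R}$ (the $L^1\to L^1$ norm of the truncated Cauchy/conjugation operator at height $1-R$ grows logarithmically; equivalently $\|k_{R\zeta}\|_1\asymp\log\frac1{1-R}$ uniformly in $\zeta$, and the integral over arcs is a convex-type average of at most $N_k$ such kernels with total mass $\|f\|_1$). The incomplete-arc term is bounded, using $|\int_{I_{j_0}}f|\le N_k^{-1}\sup\cdots$ — more robustly, $\bigl|\int_{I_{j_0}}f\,dm\bigr|\le\|f\|_1$ and each $\|k_{R\zeta_{l,j_0}}\|_1\le C\log\frac1{1-R}$, so this term is at most $\frac{m}{M_k}C\log\frac1{1-R}\|f\|_1\le C\log\frac1{1-R}\|f\|_1$. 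Hence the total is $\le\bigl(\gamma+C\log\tfrac1{1-R_k}\bigr)\|f\|_1$, which is \emph{not} yet uniformly bounded.

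This is where the hypothesis $\log\frac1{1-R_k}=O(M_k)$ is spent — but it is spent differently: the point is that one should \emph{not} be crude in bounding the Cauchy integral over arcs, but rather observe that $\frac{1}{M_k}\sum_{l}k_{R\zeta_{l,j}}$ is itself a \emph{Riemann-sum approximation}, over the arc $I_j$, to the rescaled kernel and, more importantly, that when we later pass from $f_{N}$ to $f_{N+1}$ the contribution of the incomplete last layer must tend to $0$. So the correct organization, mirroring the end of the proof of Theorem \ref{hardy}, is: (i) run the induction to get $f_N=f-\sum_{l=1}^N(\text{full }k_l\text{-layer})$ with $\|f_N\|_1\le\gamma^N\|f\|_1$ (this uses only the first point, which holds with uniform $\gamma$); (ii) a subsequence of alphabetical partial sums then converges to $f$; (iii) to upgrade to norm convergence of the \emph{whole} series, it suffices that any partial sum of the $(N+1)$-st layer has norm $o(1)$, and such a partial sum has norm $\le\frac{1}{M_k}\cdot N_k\cdot C\log\frac1{1-R_{k_{N+1}}}\cdot\|f_N\|_1$ is still not obviously small — so instead one bounds the partial sum of layer $N+1$ by $C\log\frac{1}{1-R_{k_{N+1}}}\,\|f_N\|_1\le C\log\frac1{1-R_{k_{N+1}}}\,\gamma^N\|f\|_1$ and uses that along the inductively chosen subsequence $k_{N+1}$ one is free to take $R_{k_{N+1}}$ as close to $1$ as desired; choosing it at each step so that $\log\frac1{1-R_{k_{N+1}}}\le\gamma^{-N/2}$ makes this $\to0$. (The hypothesis $\log\frac1{1-R_k}=O(M_k)$ then enters to guarantee that the \emph{available} admissible layers in $\Lambda$ — the ones whose $M_k$ is large enough relative to $\log\frac1{1-R_k}$ to keep the per-kernel coefficients $c_{l,j}/M_k$ under control and to make $S$ a genuine average — form an infinite subsequence, so the induction never gets stuck.) The main obstacle, then, is precisely this bookkeeping at step (iii): making the incomplete-layer tail vanish while the Cauchy-integral constant is only logarithmically, not uniformly, bounded — it is resolved by exploiting the freedom in choosing $R_{k_{N+1}}\to1$ fast along the induction and by using $\log\frac1{1-R_k}=O(M_k)$ to divide that logarithm out of the averaged kernel. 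For $A(\D)$ the same scheme works verbatim, replacing $\|\cdot\|_1$ by $\|\cdot\|_{A(\D)}$ and using $\|f_R-f\|_{A(\D)}\to0$ and the fact that $\|\mathcal C\|_{C(\T)\to A(\D)}$ restricted to height $1-R$ is again $O(\log\frac1{1-R})$.
\qed
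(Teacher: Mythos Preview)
Your argument has a genuine gap at the partial-sum estimate, and the gap stems from the ordering you impose on the double sum in $S$. You write $S(z)=\sum_{j}\frac{1}{M_k}\sum_{l}\cdots$ with the arc index $j$ on the outside and the multiplicity index $l$ on the inside. A partial sum in this ordering is then approximated (up to a uniformly small error) by a Cauchy integral of $f$ over a union of arcs, and you correctly observe that such an integral is \emph{not} uniformly bounded in $L^1$ or in $A(\D)$: its norm is only $\lesssim\log\frac{1}{1-R_k}\,\|f\|$. Your proposed fix---choosing $k_{N+1}$ along the induction so that $\log\frac{1}{1-R_{k_{N+1}}}\le\gamma^{-N/2}$---does not work: the radii $R_k$ are prescribed in advance, and at step $N$ the approximation requirement $\|f_N-(f_N)_{R_{k_{N+1}}}\|\le\delta\|f_N\|$ forces $R_{k_{N+1}}$ to be close to $1$ in a way that depends on the function $f_N$ and is in no way controllable in terms of $N$. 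There is no reason the two constraints on $R_{k_{N+1}}$ are simultaneously satisfiable. Moreover, your account of where the hypothesis $\log\frac{1}{1-R_k}=O(M_k)$ enters is hand-waving: this hypothesis holds for \emph{every} $k$, so it cannot be the condition singling out an ``admissible'' subsequence, and in fact $M_k$ never appears in any of your concrete estimates for the partial sums.

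The paper resolves this by using the \emph{opposite} ordering: the alphabetical order on $\Lambda=\{w_{k,l,j}\}$ puts $l$ before $j$, so within a layer the outer summation runs over the multiplicity index $l$. Any partial sum then splits into (i) a block of $\tilde M\le M_k$ complete circles, whose sum differs from $\frac{\tilde M}{M_k}f(R_k z)$ by at most $\frac14\|f\|$ and hence has norm $\lesssim\|f\|$; and (ii) one incomplete circle (a single fixed $l$, partial in $j$), which carries the prefactor $\frac{1}{M_k}$ and is close to
\[
\frac{1}{M_k}\int_I\frac{f(\zeta)}{1-R_k\bar\zeta z}\,dm(\zeta),\qquad I=\bigcup_{j\le\tilde N}I_{k,j}.
\]
Using $\int_\T|1-R_k\bar\zeta z|^{-1}\,dm\lesssim\log\frac{1}{1-R_k}$ (integrated in $z$ for $H^1$, taken as a supremum for $A(\D)$), this term has norm $\lesssim\frac{1}{M_k}\log\frac{1}{1-R_k}\,\|f\|\lesssim\|f\|$ by the hypothesis. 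This is exactly where and how $\log\frac{1}{1-R_k}=O(M_k)$ is spent: the factor $1/M_k$ cancels the logarithm and yields a \emph{uniform} bound on all intermediate partial sums, after which the induction concludes exactly as in Theorem~\ref{hardy}.
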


In what follows we write $X\lesssim Y$ if 
there is a constant $C>0$ such that $X\le C Y$ for all admissible values of parameters. 

\begin{proof} We start with a trivial formula
$$
f(R_k z) = \frac{1}{M_k} \sum_{l=1}^{M_k} \int_\T \frac{f(\zeta)}{1-R_k \bar \zeta z} dm(\zeta)
$$
and its discretization
$$
S_k(z) =  \frac{1}{M_k} \sum_{l=1}^{M_k} \sum_{j=1}^{N_k} \bigg(\int_{I_{k,j}} f(\zeta) dm(\zeta) \bigg)
\frac{1}{1-R_k \bar \zeta_{k,l,j} z}.
$$

First we consider the case of the space $H^1$.
Repeating the arguments from the proof of Theorem \ref{hardy}
one easily shows that there exists $M>0$ such that, for any $l$,
$$
\bigg\|f(R_k z) - \sum_{j=1}^{N_k} \bigg(\int_{I_{k,j}} f(\zeta) dm(\zeta) \bigg)
\frac{1}{1-R_k \bar \zeta_{k,l,j} z}\bigg\|_{H^1} \le \frac{\|f\|_{H^1}}{4}
$$ 
as soon as $N_k(1-R_k) >M$. Thus, we have $\|f(R_k z) - S_k(z)\|_{H^1} \le \|f\|_{H^1}/4$.

We need to estimate the norms of intermediate sums in $S_k$. 
Note that the outer summation goes over $l$. 
It follows from \eqref{dodr} that for $1\le \tilde M \le M_k$
$$
\bigg\|\frac{1}{M_k} \sum_{l=1}^{\tilde M} \sum_{j=1}^{N_k} \bigg(\int_{I_{k,j}} f(\zeta) dm(\zeta) \bigg)
\frac{1}{1-R_k \bar \zeta_{k,l,j} z} -  \frac{1}{M_k} \sum_{l=1}^{\tilde M} \int_\T \frac{f(\zeta)}{1-R_k \bar \zeta z} 
dm(\zeta) \bigg\|_{H^1} \le \frac{\|f\|_{H^1}}{4},
$$
while the second sum inside the norm is simply $\frac{\tilde M}{M_k} f(R_k z)$. Finally, we need to estimate, for some fixed 
$\tilde M$ and $1\le \tilde N\le N_k$, the norm
$$ 
\bigg\|\frac{1}{M_k} \sum_{j=1}^{\tilde N} \bigg(\int_{I_{k,j}} f(\zeta) dm(\zeta) \bigg)
\frac{1}{1-R_k \bar \zeta_{k, \tilde M, j} z} \bigg\|_{H^1} 
$$
or, equivalently, the norm 
$$ 
\bigg\|\frac{1}{M_k} \int_I \frac{f(\zeta)}{1-R_k \bar \zeta z} 
dm(\zeta) \bigg\|_{H^1},
$$
where $I = \cup_{j=1}^{\tilde N} I_{k,j}$. Since $\int_\T |1-\rho \zeta|^{-1} dm(\zeta) 
\lesssim \log \frac{1}{1-\rho}$, $1/2 \le\rho <1$, we have
$$
\frac{1}{M_k} \int_\T \int_I \frac{|f(\zeta)|}{|1-R_k  \bar \zeta z|} 
dm(\zeta) \, dm(z) \lesssim \frac{1}{M_k} \log \frac{1}{1-R_k} \|f\|_{H^1} \lesssim \|f\|_{H^1}
$$
by the hypothesis on $M_k$.

In the case $f\in A(\D)$ the estimate
$$
\bigg\|\frac{1}{M_k} \int_I \frac{f(\zeta)}{1-R_k \bar \zeta z} 
dm(\zeta) \bigg\|_{A(\D)} \lesssim \frac{1}{M_k} \log \frac{1}{1-R_k} \|f\|_{A(\D)} \lesssim \|f\|_{A(\D)}
$$ 
is immediate.

The rest of the proof is identical to the proof of Theorem \ref{hardy}. Let $X$ be one of the spaces $H^1$ or $A(\D)$.
For a fixed $f$ we choose $R_{k_1}$ so that $\|f(z) - f(R_{k_1} z)\|_X \le \|f\|_{X}/4$. Then 
$\|f-S_{k_1}\|_{X} \le \|f\|_{X}/2$.
Applying the procedure to $f_1 = f - S_{k_1}$ we choose  $R_{k_2}$, etc.
\end{proof} 
\bigskip


\section{Proof of Theorem \ref{wei}}

We start with the following integral representation of functions in $\mathscr{H}_\beta$.
In what follows for $f (z)= \sum\limits_{n=0}^{\infty} a_n z^n \in \mathscr{H}_\beta$ and $\rho \in (0,1)$ 
we put
$$
F_\rho(z)= \sum\limits_{n=0}^{\infty} a_n\beta_n \rho^n z^n.
$$
Note that $F_\rho$ is analytic in $\{|z| <\rho^{-1}\}$ 
and
\begin{equation}
\label{zog}
\int_\T |F_\rho(\zeta)|^2 dm(\zeta) = \sum_{n=0}^\infty |a_n|^2 \beta_n^2 \rho^{2n} \le \omega_1(\rho)\|f\|_\beta^2,
\end{equation}
where 
$$
\omega_1(\rho)=\sup_{n\ge 0} \rho^{2n} \beta_n.  
$$

\begin{lemma}
\label{inte}
Let $f(z)= \sum\limits_{n=0}^{\infty} a_n z^n \in \mathscr{H}_\beta$.
Then, for any $0<r<R <1$ and $z\in \D$,
$$
f(rz)=  \int\limits_{\T} F_{r/R} (\zeta) \overline{K^\beta_z(R\zeta)} dm(\zeta).
$$
\end{lemma}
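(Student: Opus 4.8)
The plan is to prove the identity by expanding the integrand into a Fourier series in $\zeta$ and integrating term by term, exploiting the orthonormality of the monomials $\{\zeta^n\}$ in $L^2(\T,dm)$.

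First I would record the two series on $\T$. By definition $F_{r/R}(\zeta)=\sum_{n\ge 0} a_n\beta_n (r/R)^n\zeta^n$; and since $K_z(w)=\sum_{n\ge 0}\bar z^n\beta_n^{-1}w^n$ and the numbers $\beta_n$ and $R$ are positive reals, for $\zeta\in\T$ we get
$$
\overline{K_z(R\zeta)}=\sum_{n\ge 0}\frac{z^n R^n}{\beta_n}\,\bar\zeta^n.
$$
Thus $F_{r/R}(\zeta)\,\overline{K_z(R\zeta)}$ is the product of a power series in $\zeta$ and a power series in $\bar\zeta$.

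The one point requiring care is the justification of term-by-term integration, and this is exactly where the hypotheses $0<r<R<1$, $z\in\D$ and \eqref{ls} enter. On one hand, $r/R<1$, so (as observed before the statement of this lemma, and also via \eqref{zog}) $F_{r/R}$ is analytic on the disk $\{|\zeta|<R/r\}$, which strictly contains $\overline{\D}$; hence its Taylor series converges uniformly on $\T$. On the other hand, $\limsup_n (1/\beta_n)^{1/n}\le 1$ together with $|z|R<1$ yields $\limsup_n\big(|z|^n R^n/\beta_n\big)^{1/n}\le |z|R<1$, so $\sum_{n\ge 0}|z|^n R^n/\beta_n<\infty$ and the series for $\overline{K_z(R\,\cdot)}$ converges uniformly on $\T$ as well. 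The product of two uniformly convergent series on the compact set $\T$ may be integrated term by term.

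Finally, using $\int_\T \zeta^m\bar\zeta^n\,dm(\zeta)=\delta_{mn}$, only the diagonal terms survive and
$$
\int_\T F_{r/R}(\zeta)\,\overline{K_z(R\zeta)}\,dm(\zeta)
=\sum_{n\ge 0} a_n\beta_n\Big(\frac{r}{R}\Big)^n\cdot\frac{z^n R^n}{\beta_n}
=\sum_{n\ge 0} a_n r^n z^n = f(rz),
$$
which is the asserted identity. I do not expect a genuine obstacle here: the whole content is the uniform convergence established above — a direct consequence of \eqref{ls} and $0<r<R<1$ — after which the computation is routine bookkeeping with geometric series and orthogonality of monomials. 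An essentially equivalent alternative is to note that $F_{r/R}\in L^2(\T)$ by \eqref{zog} and $K_z(R\,\cdot)\in H^2$ because $\sum_n |z|^{2n}R^{2n}/\beta_n^2<\infty$, and then read the identity off Parseval's formula.
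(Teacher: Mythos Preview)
Your proof is correct and follows the same route as the paper's own proof: expand $F_{r/R}(\zeta)$ and $\overline{K_z(R\zeta)}$ as series in $\zeta$ and $\bar\zeta$, integrate term by term using the orthonormality of monomials, and read off $\sum_n a_n r^n z^n$. The only difference is that you justify the term-by-term integration via uniform convergence (using $r/R<1$, $|z|R<1$, and \eqref{ls}), whereas the paper simply writes ``by direct computations'' and leaves this implicit.
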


\begin{proof} By direct computations
$$
\int\limits_{\T} F_{r/R} (\zeta) \overline{K^\beta_z(R\zeta)} dm(\zeta) =
\int\limits_{\T} \bigg(\sum\limits_{n=0}^{\infty}
 a_n\beta_n\frac{r^n}{R^n} \zeta^n\bigg)
 \bigg(\sum\limits_{n=0}^{\infty}\frac{R^n z^n }{\beta_n} \bar\zeta^n\bigg) dm(\zeta)
=  \sum\limits_{n=0}^{\infty}  a_n r^n z^n.
$$  
\end{proof}

We will introduce two more characteristics of the sequence $\beta$. For $\rho\in (0,1)$ put
$$
\om_2(\rho) = \sum_{n\ge 1} \frac{n^2 \rho^{2n}}{\beta_n}, \qquad \om_3(\rho) = \sum_{n\ge 0} \frac{\rho^{2n}}{\beta_n}.
$$
Note that $\om_3(\rho)$ is the square of the norm of the reproducing kernel $K^\beta_\rho$ in $\hb$, while $\om_2(\rho)$
is essentially the squared norm of its derivative. 

The key idea of a construction of a representing system is similar to the case of $H^1$.
Assume that for any $k\in \N$ there are fixed $R_k \in(0,1)$ and $N_k, M_k \in \N$
and a collection of radii $R_{k,l} \in (0,1)$, $l=1, \dots, M_k$, such that $R_{k,1} <R_{k,2} \dots 
< R_{k, M_k} =  R_k$. Consider the set of points
$$
\Lambda = \{w_{k, l, j} :\ 1\le l \le M_k, 1\le j\le N_k, k\in\N\},  \qquad  w_{k, l, j} = R_{k,l} \exp \Big(\frac{2\pi i j}{N_k}\Big).
$$

\begin{theorem}
\label{gen}
Assume that $R_{k, 1} \to 1$ and 
\begin{equation}
\label{tout}
\om_1(R_k) \om_2(R_k) = o(N_k^2), \qquad
\om_1(R_k) \om_3(R_k) = O(M_k^2)
\end{equation}
as $k\to\infty$. Then $\{K_\lambda^\beta\}_{\lambda\in\Lambda}$ is a representing system in $\hb$.
\end{theorem}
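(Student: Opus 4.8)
The plan is to mimic the $H^1$ argument from Theorem \ref{hardy1}, replacing the Cauchy integral formula by the integral representation of Lemma \ref{inte} and controlling every estimate in terms of the characteristics $\om_1,\om_2,\om_3$. Fix $f(z)=\sum a_n z^n\in\hb$. Since polynomials are dense in $\hb$ (a consequence of \eqref{ls}), one has $\|f(r\cdot)-f\|_\beta\to 0$ as $r\to 1-$; so it suffices to approximate $f_r(z)=f(rz)$ for $r$ close to $1$ by a finite linear combination of the kernels $K^\beta_{w_{k,l,j}}$ with total norm a fixed fraction of $\|f\|_\beta$, and to iterate. Given $k$, put $r = R_{k,1}$ (say) — or more precisely allow ourselves to choose, at stage $k$, a parameter $r<R_{k,1}$ close enough to $1$ — and set $R = R_{k,l}$ for the $l$-th layer. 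By Lemma \ref{inte},
\[
f(rz) = \frac{1}{M_k}\sum_{l=1}^{M_k}\int_\T F_{r/R_{k,l}}(\zeta)\,\overline{K_z(R_{k,l}\zeta)}\,dm(\zeta),
\]
and the natural discretization over the arcs $I_{k,j}$ centered at $\zeta_j = e^{2\pi i j/N_k}$ is
\[
S_k(z) = \frac{1}{M_k}\sum_{l=1}^{M_k}\sum_{j=1}^{N_k}\bigg(\int_{I_{k,j}}F_{r/R_{k,l}}(\zeta)\,dm(\zeta)\bigg)\overline{K_z(R_{k,l}\zeta_j)},
\]
which is exactly a linear combination of the $K^\beta_{w_{k,l,j}}$ since $\overline{K_z(R\zeta_j)} = K^\beta_{R\bar\zeta_j}(z)$ evaluated appropriately (i.e. $\overline{K_z(R\zeta_j)}$ is the reproducing kernel at the point $R_{k,l}\overline{\zeta_j}$, up to the harmless relabeling $j\mapsto -j$).

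Next I would estimate $\|f_r - S_k\|_\beta$. For a single layer $l$ the error is
\[
\int_\T F_{r/R}(\zeta)\big(\overline{K_z(R\zeta)}-\overline{K_z(R\zeta_{j(\zeta)})}\big)\,dm(\zeta),
\]
and here the key point is to bound the $\hb$-norm of $z\mapsto \overline{K_z(R\zeta)}-\overline{K_z(R\zeta')}$ for $\zeta,\zeta'\in I_{k,j}$. Writing $K_z(R\zeta) = \sum_n \frac{R^n z^n}{\beta_n}\bar\zeta^{\,n}$ and using $|\bar\zeta^{\,n}-\bar\zeta'^{\,n}|\le n|\zeta-\zeta'|\le \pi n/N_k$, the squared $\hb$-norm of the difference is at most $\frac{\pi^2}{N_k^2}\sum_n \frac{n^2 R^{2n}}{\beta_n}\le \frac{\pi^2}{N_k^2}\om_2(R_k)$ (monotonicity of $\om_2$ in $\rho$, and $R\le R_k$). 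Combining with \eqref{zog}, which gives $\|F_{r/R}\|_{L^2(\T)}^2\le \om_1(r/R)\|f\|_\beta^2\le \om_1(R_k)\|f\|_\beta^2$ once $r/R\le R_k$ — this needs $r\le R_k R_{k,1}$, fine since $r$ is ours to choose — and Cauchy–Schwarz over $\zeta$, one obtains for each $l$
\[
\bigg\|\int_\T F_{r/R}(\zeta)\big(\overline{K_\cdot(R\zeta)}-\overline{K_\cdot(R\zeta_j)}\big)dm\bigg\|_\beta \lesssim \frac{\sqrt{\om_1(R_k)\om_2(R_k)}}{N_k}\,\|f\|_\beta,
\]
and averaging over $l$ keeps the same bound. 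By the first hypothesis in \eqref{tout} this is $o(1)\|f\|_\beta$, so for $k$ large $\|f_r - S_k\|_\beta\le \|f\|_\beta/4$, say.

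The remaining task — and the point where the second hypothesis enters — is to control the norms of the partial sums of $S_k$ in the alphabetical order, so that the final series (obtained by concatenating the blocks for $k_1<k_2<\dots$) actually converges, not just a subsequence of its partial sums. A partial sum truncates the outer $l$-sum at some $\tilde M$ and the inner $j$-sum at some $\tilde N$. The block $\frac{1}{M_k}\sum_{l=1}^{\tilde M}[\text{full }j\text{-sum}]$ differs from $\frac{\tilde M}{M_k}f_r$ by at most $\frac{\tilde M}{M_k}\cdot\frac14\|f\|_\beta \le \frac14\|f\|_\beta$ by the previous step applied layer-by-layer, so that piece is $\lesssim \|f\|_\beta$; and the leftover incomplete layer is $\frac{1}{M_k}\int_{I}F_{r/R_{k,\tilde M}}(\zeta)\overline{K_\cdot(R_{k,\tilde M}\zeta)}dm(\zeta)$ over a union $I$ of arcs, whose $\hb$-norm is at most $\frac{1}{M_k}\|F_{r/R}\|_{L^2(\T)}\cdot\|K^\beta_{R\zeta}\|$-type bound $\lesssim \frac{1}{M_k}\sqrt{\om_1(R_k)}\cdot\sqrt{\om_3(R_k)}\,\|f\|_\beta\lesssim\|f\|_\beta$ by the second part of \eqref{tout}. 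Hence every partial sum of $S_k$ has $\hb$-norm $\lesssim\|f\|_\beta$ with a fixed constant; running the usual iteration $f_0=f$, $f_{m} = f_{m-1}-S_{k_m}$ with $\|f_m\|_\beta\le \|f_{m-1}\|_\beta/2$ and $k_1<k_2<\dots$, the incomplete blocks contribute $\lesssim \|f_{m-1}\|_\beta\to 0$, so the concatenated series converges to $f$ in $\hb$. The main obstacle is precisely this last bookkeeping step: ensuring the two scales $N_k$ (controlling the discretization error, tied to $\om_2$) and $M_k$ (controlling the intermediate-sum norms, tied to $\om_3$) can be chosen \emph{simultaneously} with $R_{k,1}\to 1$, which is exactly what \eqref{tout} encodes; everything else is a routine transcription of the $H^1$ proof with $\om_1^{1/2}$ playing the role of the $L^2\!\to\!L^\infty$ blow-up factor and $\log\frac1{1-R_k}$ replaced by $\om_3(R_k)^{1/2}$.
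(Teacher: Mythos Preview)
Your proposal is correct and follows essentially the same route as the paper: the same integral representation from Lemma~\ref{inte}, the same discretization $S_k$, the same first-hypothesis bound $\|f_r-S_k\|_\beta\lesssim N_k^{-1}\sqrt{\om_1(R_k)\om_2(R_k)}\,\|f\|_\beta$, and the same second-hypothesis control of the incomplete-layer partial sum via $M_k^{-1}\sqrt{\om_1(R_k)\om_3(R_k)}$. The only cosmetic difference is that you bound $\|K^\beta_{R\zeta}-K^\beta_{R\zeta_j}\|_\beta$ directly and then apply Minkowski's integral inequality, whereas the paper expands $f_r-S_k$ in the monomial basis and bounds the coefficients $c_n$; these are the same computation written two ways. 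One small point to make explicit: when you handle the ``leftover incomplete layer'' you should say that the discrete partial sum $T_{\tilde M,\tilde N}$ differs from the integral $\frac{1}{M_k}\int_I F_{r/R}(\zeta)\overline{K_\cdot(R\zeta)}\,dm$ by the already-controlled discretization error, and \emph{then} bound the integral---the paper does this step in full, and your text elides it.
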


\begin{proof}
By Lemma \ref{inte} we have for any $r<R_{k,1}$
$$
f(rz) = \frac{1}{M_k} \sum_{l=1}^{M_k} \int_ \T F_{r/R_{k,l}} (\zeta) \overline{K^\beta_z(R_{k,l}\zeta)} dm(\zeta).
$$
The idea of the proof is to discretize this integral replacing it by 
$$
S_k(z) = \frac{1}{M_k} \sum_{l=1}^{M_k} \sum_{j=1}^{N_k} 
\bigg(\int_{I_{k,j}} F_{r/R_{k,l}} (\zeta) dm(\zeta)\bigg)    K^\beta_{w_{k,l,j}}(z),
$$
where $I_{k,j} = [\exp(\frac{(2 j -1)\pi i}{N_k}), \exp(\frac{(2 j +1)\pi i}{N_k})]$. 

We consider in detail one step of approximation. For the moment we assume that $k$ is fixed 
and omit it, i.e., we write $R, R_l, w_{l,j}, M, N$ in place of  $R_k, R_{k,l}, w_{k,l,j}, M_k, N_k$. 
Recall that $R_1<\dots < R_M = R$. Assume that $r < R_1^2$.

Note that $\overline{K^\beta_z(R_l \zeta)} = K^\beta_{R_l \zeta} (z)$. Then we have 
$$
f(rz) - S(z)  =  \frac{1}{M} \sum_{l=1}^{M} \sum_{j=1}^{N} 
\int_{I_j}  F_{r/R_{l}} (\zeta) \big(K^\beta_{R_l \zeta} (z) - K^\beta_{w_{l,j}}(z)\big)
dm(\zeta) = \sum_{n=1}^\infty c_n \frac{z^n}{\beta_n},
$$
where 
$$
c_n =  \frac{1}{M} \sum_{l=1}^{M} \sum_{j=1}^{N} 
\int_{I_j}  \big( (R_l\bar \zeta)^n - \overline{w}_{l,j}^n \big) F_{r/R_{l}}(\zeta) dm(\zeta).
$$
Since $w_{l,j} = R_l \zeta_j$, $\zeta_j = e^\frac{2\pi i j}{N}  \in I_j$, we have for $ \zeta \in I_j$
$$
|(R_l \zeta)^n - w_{l,j}^n| =R_l^n \cdot |\zeta - \zeta_j| \cdot  \Big| \sum_{s=0}^{n-1} \zeta^s \zeta_j^{n-1-s}\Big|
\lesssim \frac{n R_l^n}{N} \le \frac{n R^n}{N},
$$
whence
$$
|c_n| \lesssim \frac{nR^n}{M N} \sum_{l=1}^{M}  \int_\T |F_{r/R_{l}}(\zeta)| dm(\zeta).
$$
Recall that $r\le R_1^2$ and so $r/R_l \le R$.
It follows from \eqref{zog} that 
\begin{equation}
\label{bat}
\int_\T |F_{r/R_{l}}(\zeta)| dm(\zeta) \le (\om_1(R))^{1/2} \|f\|_\beta.
\end{equation}
We conclude that
$$
|c_n| \lesssim \frac{n  R^n (\om_1(R))^{1/2}}{N} \|f\|_\beta.
$$
Hence, 
$$
\|f_r - S\|_\beta^2  = \sum_{n=1}^\infty \frac{|c_n|^2}{\beta_n} \lesssim 
\frac{\om_1(R)   \|f\|^2_\beta}{N^2} \sum_{n=1}^\infty \frac{n^2 R^{2n}}{\beta_n} = 
\frac{\om_1(R)  \om_2(R)}{N^2} \|f\|^2_\beta.
$$
By the first condition in \eqref{tout}, taking $R=R_k$ and $N=N_k$ with a large $k$, one can make this norm as small as we wish. 

Let us show that any intermediate partial sum is uniformly bounded by the norm of $f$. 
We need to estimate the norms of the sums 
$$
T_{\tilde M}(z) = \frac{1}{M} \sum_{l=1}^{\tilde M} \sum_{j=1}^{N} 
\bigg( \int_{I_j}  F_{r/R_{l}} (\zeta) dm(\zeta) \bigg) K^\beta_{w_{l,j}}(z)
$$
and 
$$
T_{\tilde M, \tilde N}(z) = \frac{1}{M} \sum_{j=1}^{\tilde N} 
\bigg( \int_{I_j}  F_{r/R_{\tilde M}} (\zeta) dm(\zeta) \bigg) K^\beta_{w_{\tilde M, j}}(z),
$$
where $1\le \tilde M\le M$ and $1\le \tilde N \le N$. 

First let us consider the sums over several complete circles.
It follows from the above estimates of the difference between the integral and its discretization 
and from Lemma \ref{inte} that for any $1\le \tilde M \le M$ one has
$$
\bigg\| T_{\tilde M}(z)  -
\frac{1}{M} \sum_{l=1}^{\tilde M} \int_ \T  F_{r/R_{l}} (\zeta) \overline{K^\beta_z(R_{l}\zeta)} dm(\zeta) 
\bigg\|_\beta^2
= \bigg\| T_{\tilde M}(z) - \frac{\tilde M}{M} f(rz) \bigg\|_\beta^2 \lesssim  \frac{\om_1(R)  \om_2(R)}{N^2} \|f\|^2_\beta.
$$
Hence, $\| T_{\tilde M} \|_\beta \lesssim \|f\|_\beta$.
It remains to estimate the norm of the sum $T_{\tilde M, \tilde N}(z)$ over some incomplete circle.
Again, by the above estimates, we have
$$
\bigg\|  T_{\tilde M, \tilde N}(z) -  
\frac{1}{M} \int_ I  F_{r/R_{\tilde M}} (\zeta) \overline{K^\beta_z(R_{\tilde M}\zeta)} dm(\zeta) 
 \bigg\|_\beta^2 \lesssim  \frac{\om_1(R)  \om_2(R)}{N^2} \|f\|^2_\beta,
$$
where $I = \cup_{j=1}^{\tilde N} I_j$. Using the expansion of the kernel function we get
$$
\int_ I  F_{r/R_{\tilde M}} (\zeta) \overline{K^\beta_z(R_{\tilde M}\zeta)} dm(\zeta) 
= \sum_{n=0}^\infty  d_n \frac{R_{\tilde M}^n}{\beta_n} z^n, 
$$
where 
$$
d_n = \int_ I  F_{r/R_{\tilde M}} (\zeta) \bar \zeta^n dm(\zeta).
$$
Making use of \eqref{bat} and the fact that $r/R_{\tilde M} \le R$ we get
$$
\bigg\| \frac{1}{M}\int_ I  F_{r/R_{\tilde M}} (\zeta) \overline{K^\beta_z(R_{\tilde M}\zeta)} dm(\zeta) 
\bigg\|_\beta^2  \le \frac{\om_1(R) \|f\|^2_\beta}{M^2} \sum_{n=0}^\infty \frac{R^{2n}}{\beta_n} = 
\frac{ \om_1(R)\om_3(R)}{M^2}  \|f\|^2_\beta \lesssim   \|f\|^2_\beta
$$
by the second condition in \eqref{tout}.

The rest of the proof is analogous to the proof of Theorem \ref{hardy}.
\end{proof}

\begin{remark}
{\rm Representing systems satisfying the condition \eqref{tout} of Theorem \ref{gen} 
are, apparently, more dense than necessary and in special cases these conditions can be 
substantially relaxed. Our goal was to give a qualitative answer to the question about existence 
of representing systems. It is an interesting problem for further research to find optimal density conditions. }
\end{remark}
\bigskip

\section{Open questions}
\label{open}

Representing systems of reproducing kernels in spaces of analytic functions in the disk are far from being well understood. 
While it does not seem reasonable to expect a complete description of representing systems of reproducing kernels
even in $H^2$ setting, a natural question is how small (in some sense) a representing system can be. 
Of course, the smaller the system is, the sharper is the result. One way to measure the size of the system is to introduce a density. 
E.g., for $\Lambda\subset \D$, put
$$
D_+(\Lambda) = \limsup_{r\to 1-}\, (1-r)\cdot \#(\Lambda \cap \{|z| <r\}),
$$
where $\#E$ denotes the cardinality of $E$. In all known examples of representing systems of the Cauchy kernels in $H^p$ 
one has $D_+(\Lambda) >0$.
\medskip
\\
{\bf Question 1.} {\it Do there exist representing systems $\mathcal{K}(\Lambda)$ of the Cauchy kernels in $H^2$
\textup(or $H^p$, $1<p<\infty$\textup), such that $D_+(\Lambda) =0$? }
\medskip

We find it plausible that the answer is ``no'' and so the case when $n_k(1-r_k)$ behaves like a constant is optimal for systems
of the form \eqref{loop}. Of course, one can ask similar questions about representing systems of reproducing kernels in 
general spaces $\hb$ considering
appropriate densities.

Also, in all known examples the points of $\Lambda$ accumulate (and, moreover, nontangentially)
to each point of the unit circle. On the other hand, 
there exist complete systems of the Cauchy kernels which accumulate to a single point on the boundary. 
\medskip
\\
{\bf Question 2.} {\it Do there exist representing systems $\mathcal{K}(\Lambda)$ of the Cauchy kernels in $H^2$
such that the closure ${\rm Clos}\, \Lambda$ does not contain $\T$, i.e., omits some open arc? }
\medskip

As we have seen in Theorem \ref{hardy1}, one can construct representing systems
of the Cauchy kernels in $H^1$ or in $A(\D)$ if one takes somewhat (logarithmically) denser sets, than for $H^p$, $p>1$. 
The question about sharp density remains open. 
\medskip 
\\
{\bf Question 3.} {\it Do the sequences \eqref{loop} with $n_k(1-r_k) \ge M >0$ generate representing systems in $H^1$
or in $A(\D)$? If not, then what is the correct optimal density?}
\medskip

In an interesting paper \cite{cs} (see also \cite{cms}) J.\,A.~Cima and M.~Stessin studied the problem of the constructive recovery of 
a function in a Banach space of analytic functions from its values on a uniqueness set. For a class of spaces in the disk they 
constructed a sequence of approximating functions which are finite sums of reproducing kernels. In particular, 
such a recovery is possible in $H^p$ with $2\le p <\infty$ for {\it any} uniqueness (i.e., non-Blaschke) 
set $\Lambda = \{\lambda_n\}$ and for any
$p\ge 1$ under some additional density conditions on the set (see \cite[Theorems 2 and 4]{cs}). The approximants are finite linear combinations of the Cauchy kernels. Applying the approximation method iteratively (as in the proof of Theorem \ref{hardy}) one can, apparently, construct, for any $f\in H^p$, a series of the form $\sum_n c_n k_{\lambda_n}$ such that
some subsequence of its partial sums converges to $f$. These results seem to be essentially different from our setting 
since for a representing system the whole sequence of partial sums must converge in the norm. This need not be true
unless the set $\Lambda$ has some additional symmetry (a simple example of a uniqueness set which does not generate
a representing system of the Cauchy kernels can be found in \cite[Theorem 3.3]{fkl}). It seems however to be an interesting question, 
whether for a representing system $\mathcal{K}(\Lambda) $ of the Cauchy kernels one can find an explicit expression
of the coefficients in an expansion of $f$ in terms of the values of $f$ on $\Lambda$.
\bigskip
\\
{\bf Acknowledgement.} The authors are grateful to 
Nikolaos Chalmoukis for useful discussions, to Raymond Mortini for attracting their attention to the paper \cite{cs}
and to the referee for numerous helpful remarks.

\bigskip

\noindent
{\bf Statements and Declarations}
\medskip
\\
{\bf Funding:} 
The work is supported by Ministry of Science and Higher Education of the Russian Federation (agreement No 075-15-2021-602) and by Theoretical Physics and Mathematics Advancement Foundation ``BASIS''.
\medskip
\\
{\bf Competing interests:} The authors have no relevant financial or non-financial interests to disclose.
\medskip
\\
{\bf Data availability:} The article does not contain any data for analysis.

\bigskip

\end{document}